\documentclass[twoside]{amsart}

\numberwithin{equation}{section}
\usepackage{amsmath}
\usepackage{amssymb,amsfonts,amsthm,latexsym,psfrag}
\usepackage{graphicx,picins,floatflt,color,epic,eepic,bbold,enumerate}
\usepackage[all]{xy}

\theoremstyle{plain}
        \newtheorem{theorem}{Theorem}[section]
        \newtheorem{lemma}[theorem]{Lemma}
        \newtheorem{definition}[theorem]{Definition}
        \newtheorem{proposition}[theorem]{Proposition}

\theoremstyle{definition}
\newcommand {\cc} [1] {\overline {{#1}}}
\newcommand {\id} {\operatorname{id}}
\newcommand {\Kern} {\operatorname{ker}}
\newcommand {\Bild} {\operatorname{im}}
\newcommand {\conv} {\operatorname{conv}}

\newcommand {\res} {\operatorname{res}}
\newcommand {\qres} {\widetilde{\operatorname{res}}}
\newcommand {\qkos} {\widetilde{\partial}}
\newcommand {\prol} {\operatorname{ext}}

\newcommand {\fnu} {[\![\nu]\!]}
\newcommand {\ii} {\operatorname{i}}
\newcommand {\ee} {\operatorname{e}}
\newcommand{\bs}{\boldsymbol}

\def\connsum{\mathop{\#}\limits}

\setcounter{section}{0}


\textwidth=15 cm
\textheight=22 cm
\topmargin=0.5 cm
\oddsidemargin=0.5 cm
\evensidemargin=0.5 cm
\footskip=40 pt

\begin{document}
\title[On the existence of star products \dots]{On the existence of star products on quotient spaces 
of linear Hamiltonian torus actions}

\author{Hans-Christian Herbig, Srikanth~B.~Iyengar and Markus~J.~Pflaum}
\address{Fachbereich Mathematik, Ernst-Moritz-Arndt Universit\"at, Greifswald, 
         Germany}
\email{herbig@uni-greifswald.de}  
\address{Department of Mathematics, 
University of Nebraska, Lincoln NE 68588, U.S.A.}
\email{siyengar2@math.unl.edu}    
\address{Department of Mathematics, University of Colorado,
Boulder, CO 80309-0395, U.S.A.}   
\email{Markus.Pflaum@colorado.edu} 

\begin{abstract}
We discuss BFV deformation quantization \cite{BHP} in the special case of a linear Hamiltonian torus action. In particular, we show that the Koszul complex on the moment map of an effective linear Hamiltonian torus action is acyclic. We rephrase the nonpositivity condition  of \cite{AGJ} for linear Hamiltonian torus actions. It follows that reduced spaces  of such actions admit continuous star products.
\end{abstract}

\maketitle
\tableofcontents
\section{Introduction}
The purpose of this note is to elaborate on the domain of applicability of the following theorem in the special situation of linear Hamiltonian torus actions. 
In particular, we will show that in this special case condition (\ref{ci}) below is essentially always fulfilled and condition (\ref{gh}) has a simple geometric meaning. These results might be of independent interest.
\begin{theorem}[\cite{BHP}]\label{qth} Let $G$ be a compact, connected Lie group acting in a Hamiltonian fashion on a symplectic manifold $(M,\omega)$. Let $J:M\to \mathfrak g^*$ be an equivariant moment map for this action satisfying the following requirements
\begin{enumerate}[\quad\rm(1)]
\item \label{gh} for every $f\in \mathcal C^\infty(M)$ such that the restriction of $f$ to the zero fibre $Z:=J^{-1}(0)$ vanishes there exist a smooth $F: M\to \mathfrak g$ such that $f=\langle J,F\rangle$,
\item \label{ci} the Koszul complex on $J$ over the ring $\mathcal C^\infty(M)$ is acyclic. 
\end{enumerate} 
Then there exists a continuous formal deformation quantization of the (possibly singular) reduced space $M_0=Z/G$. 
\end{theorem}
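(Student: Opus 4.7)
The strategy is to carry out the BFV (Batalin--Fradkin--Vilkovisky) construction of a star product on the reduced space, where conditions (\ref{gh}) and (\ref{ci}) are precisely what is needed to identify the classical BRST cohomology with $\mathcal C^\infty(M_0)$ and to lift the classical BRST differential to a quantum one.

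First I would set up the classical BRST complex. Choose a basis of $\mathfrak g$ and introduce ghost variables $\eta^a\in\mathfrak g^*[1]$ and antighost momenta $\mathcal P_a\in\mathfrak g[-1]$, giving the graded algebra $\mathcal A=\mathcal C^\infty(M)\otimes\Lambda\mathfrak g^*\otimes\Lambda\mathfrak g$ with its Koszul--Chevalley--Eilenberg differential $D_0=\delta+\ad^*_\eta$, where $\delta$ contracts with the components $J_a$ of the moment map. Condition~(\ref{ci}) guarantees that the Koszul part $\delta$ is acyclic in positive degrees, so a standard spectral sequence computation with respect to ghost degree collapses and yields $\Hm^0(\mathcal A,D_0)=(\mathcal C^\infty(M)/(J))^G$. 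Condition~(\ref{gh}) says precisely that $(J)$ equals the vanishing ideal $\mathcal I(Z)$ of the zero fibre, so this $G$-invariant quotient is naturally identified with $\mathcal C^\infty(M_0)$. This fixes the classical picture that the quantization must deform.

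Next I would quantize. Using Fedosov's construction on the symplectic manifold $M$, take a $G$-invariant star product $\star$ on $\mathcal C^\infty(M)\fnu$; this extends to the ghost algebra $\mathcal A\fnu$ by combining $\star$ with the canonical graded commutative/Clifford pairing between $\eta^a$ and $\mathcal P_a$. The classical BRST charge $\Theta_0=J_a\eta^a-\tfrac12 f^c_{ab}\eta^a\eta^b\mathcal P_c$ satisfies $\{\Theta_0,\Theta_0\}=0$; the task is to construct $\Theta=\Theta_0+\nu\Theta_1+\nu^2\Theta_2+\cdots$ with $[\Theta,\Theta]_\star=0$, by induction on $\nu$. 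At each order the obstruction is a $D_0$-closed element of ghost degree $+1$, and because conditions~(\ref{gh}) and~(\ref{ci}) force $\Hm^1(\mathcal A,D_0)=0$ in the appropriate bidegree (again via the Koszul/Chevalley spectral sequence), the obstruction is exact, so $\Theta_{n+1}$ exists. The quantum BRST differential is then $D_\nu=\tfrac{1}{\ii\nu}[\Theta,\cdot]_\star$.

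Having the quantum differential, I would identify the reduced star product as the induced multiplication on $\Hm^0(\mathcal A\fnu,D_\nu)$. A homological perturbation argument, or equivalently a Nest--Tsygan style comparison of the two spectral sequences filtered by $\nu$, shows that $\Hm^0(\mathcal A\fnu,D_\nu)\cong\Hm^0(\mathcal A,D_0)\fnu=\mathcal C^\infty(M_0)\fnu$ as $\mathbb R\fnu$-modules, and the inherited associative product is a formal deformation of the Poisson algebra of $M_0$. Continuity of the resulting star product, in the sense of $\mathcal C^\infty$ topology on $\mathcal C^\infty(M_0)$, follows because every step (Fedosov star product, inductive lift of $\Theta$, choice of a continuous contracting homotopy of the Koszul complex supplied by condition~(\ref{ci})) can be performed with bidifferential, hence continuous, operators; here one invokes a smooth Whitney-type extension to realise cohomology classes on $Z/G$ by continuous cochains on $M$.

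The main obstacle is the inductive construction of $\Theta$: one must verify that the obstruction really lies in the vanishing cohomology group and, crucially, that a continuous choice of primitive can be made uniformly in $\nu$. This is where acyclicity of the Koszul complex on $J$ (condition~(\ref{ci})) is used in its strongest form --- not merely as a vector-space statement but with a continuous contracting homotopy --- and where the ideal-generation statement (condition~(\ref{gh})) is needed to keep the cohomology concentrated in degree zero so that no higher obstructions arise when passing from $\Hm^0(\mathcal A,D_0)$ to $\Hm^0(\mathcal A\fnu,D_\nu)$.
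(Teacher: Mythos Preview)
Your outline is the BFV/BRST route that the paper itself follows (and attributes to \cite{BHP,HPhD}); the paper's Section~\ref{quant} packages the same ingredients --- a strongly invariant star product, the Koszul resolution, and homological perturbation --- into an explicit closed formula $f*_0 g=\qres\big(\prol(f)*\prol(g)\big)$ built from a deformed restriction map $\qres=\res(\id-(\qkos_1-\partial_1)h_0)^{-1}$, which is exactly what your ``homological perturbation argument'' produces once one chooses the splitting data. So the strategy is right and matches the paper.

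Two points deserve tightening. First, you need a \emph{strongly} invariant star product ($J_\xi*f-f*J_\xi=\nu\{J_\xi,f\}$), not merely a $G$-invariant one; this is what makes $\qkos$ (equivalently, the leading part of your quantum BRST differential) square to zero on the nose, and the paper singles this out explicitly. Second, your continuity argument is where the real analytic work hides, and the claim that everything is ``bidifferential, hence continuous'' is not correct: the contracting homotopies $h_i$ of the Koszul complex and the extension map $\prol\colon\mathcal C^\infty(Z)\to\mathcal C^\infty(M)$ are \emph{not} differential operators. Condition~(\ref{ci}) gives only algebraic acyclicity; the existence of \emph{continuous} $h_i$ requires a division theorem \`a la Bierstone--Schwarz, and the continuous $\prol$ uses the extension property for real analytic sets (Bierstone--Milman). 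The paper invokes precisely these results, and without them the continuity of $*_0$ does not follow.
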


Throughout the paper $\mathfrak g$ stands for the Lie algebra of the Lie group $G$. We denote the dual pairing between $\mathfrak g$ and its dual space $\mathfrak g^*$ by $\langle \:,\:\rangle$.

Let us recall what is meant by the Koszul complex on the map $J$. The space of Koszul chains is defined to be the free $\mathcal C^\infty(M)$-module $K_\bullet=K_\bullet\left(\mathcal C^\infty(M),J\right):=\wedge^\bullet\mathfrak g\otimes \mathcal C^\infty(M)$. Here the tensor product is taken over the ground field $\mathbb K$ which will be the field of real numbers $\mathbb R$ or the field of complex numbers $\mathbb C$. The Koszul differential $\partial:K_\bullet\to K_{\bullet-1}$ is given by contraction with $J$. By choosing a basis $e_1,\dots,e_\ell$ for $\mathfrak g$ with dual basis $e^1,\dots,e^\ell$ we may write $\partial=\sum_{i=1}^\ell J_i\:\iota(e^i)$. Here the components  $J_i:=J_{e_i}:=\langle J,e_i\rangle$ are given by pairing the result of the map $J$ with the basis vector $e_i$ and $\iota(e^i)$ means contraction with $e^i$. It is natural to augment the Koszul complex with the restriction map $\res:K_0=\mathcal C^\infty(M)\to\mathcal C^\infty(Z)$. By definition of $C^\infty(Z)$ the restriction map is onto. The first terms of the augmented Koszul complex are
\[0\:{\longleftarrow}\mathcal C^\infty(Z)\stackrel{\res}{\longleftarrow}K_0=\mathcal C^\infty(M)\stackrel{\partial_1}{\longleftarrow}K_1=\mathfrak g\otimes\mathcal C^\infty(M) \stackrel{\partial_2}{\longleftarrow}\cdots.\]
Condition (\ref{gh}) of the theorem is nothing but the exactness of the augmented Koszul complex in degree $0$. We will refer to it by saying that the \emph{generating hypothesis} holds. Condition (\ref{ci}) means that the complex is exact in all higher degrees. In that case $J$ is called a \emph{complete intersection}. A potential property of $J$, which is closely related to condition (\ref{gh}), is the following.
 
\begin{definition} We say that the moment map $J$ \emph{changes sign at $\zeta\in Z=J^{-1}(0)$} if for every $\xi\in \mathfrak g$ either one of the following assertions hold
\begin{itemize}
\item there exist an open neighborhood $U$ of $\zeta$ in $M$ such that ${J_\xi}_{|U}=0$,
\item for every open neighborhood $U$ of $\zeta$ in $M$ there exist $m_1,m_2\in U$ such that $J_\xi(m_1)>0$ and $J_\xi(m_2)<0$.
\end{itemize}
\end{definition}
\noindent 
Note that if $0$ is not in the image of $J$, then $J$ does not change sign by definition. It is known \cite[Propositions 6.6 and 6.7]{AGJ} that a moment map which satisfies the generating hypothesis necessarily changes sign at every point of the zero fibre. In the special case of a Hamiltonian torus action also the \emph{converse} holds \cite[Theorem 6.8]{AGJ}. We will see in Proposition \ref{equiv} that in the case of a linear torus action the sign change can be easily checked and is related to properties of the image of the moment map.  

The proof of Proposition \ref{equiv} and of Proposition \ref{toriareci}, which says that any moment map of a linear Hamiltonian torus action is a complete intersection, will occupy the section \ref{HTA} and section \ref{prop21}. Actually the arguments used are more or less elementary, the basic thread being to treat the quadratic equations by linear means. Despite their simplicity the results are important because they provide a complete and uniform picture and lots of examples of quantizable singular spaces.  The moral is that one is now in a position to attack the remaining cases, including the class of polarized torus actions \cite{GGK}, the most basic example being the harmonic oscillator. For the bulk of those torus actions the constraint surface will not be first class, and the method of homological phase space reduction (cf. section \ref{quant}) does not apply without modification. In the physics literature there exist several proposals how to rectify the situation (see, e.g., \cite{Henneaux})  and the Hamiltonian torus actions might provide a good testing ground for those ideas. We would like to stress that, in contrast to the abelian case, in the case of linear Hamiltonian actions of nonabelian groups one cannot expect such an uniform answer. This is because there are prominent examples where the reducedness question (which is related to condition (\ref{gh}) above) is notoriously difficult, see for instance \cite{Kn}. On the other hand, there are many nonabelian examples which are not complete intersections, e.g., angular momentum in dimension $\ge 3$.

We assume that the reader is acquainted with the basic notions of symplectic geometry and deformation quantization, a good reference is the book \cite{Waldmann}.  Some of the arguments also use basic commutative algebra. We provide details for the convenience of the reader.

\vspace{4mm}
\noindent\textbf{Acknowledgements.} The authors would like to thank E. Lerman and M. Davis for stimulating discussions. S.B.I. was partly supported by NSF grant DMS 0602498. 
Part of the work of S.B.I. was done at the University of Paderborn on a visit made possible by a For\-schungs\-preis from the Humboldt\-stiftung. The research of M.P. and H.-C.H. has been financially supported by the Deutsche Forschungsgemeinschaft.

\section{Linear Hamiltonian torus actions}\label{HTA}
Let $G:=\mathbb T^\ell$ be the $\ell$-dimensional torus, i.e., the $\ell$-fold  cartesian product of circles. Recall \cite{GGK} that any linear Hamiltonian $G$-action can be written as an action on the flat K\"ahler space $(\mathbb C^n,-2\omega_0)$ as follows. In order to avoid annoying prefactors we take the liberty to rescale the standard K\"ahler form $\omega_0=\ii/2\sum_i dz_i\wedge d\cc{z}_i$. Using the basis $e_1,\dots,e_\ell$ of $\mathfrak g$ we are able to record the data of the action into the matrix of weights $A=(a_{ij})\in \mathbb Z^{\ell\times n}$. Writing $g\in G$ as 
\[g=\left(\ee^{2\pi\ii\xi^1},\ee^{2\pi\ii \xi^2},\dots,\ee^{2\pi\ii \xi^\ell}\right)\]
 for $(\xi^1,\xi^2,\dots,\xi^\ell)\in\mathbb R ^\ell\cong \mathfrak g$, $g$ acts on $z=(z_1,z_2,\dots,z_n)$ according to  $z_j\mapsto \ee^{-2\pi\ii \sum_i a_{ij}\xi^i}z_j$ for $j=1,2,\dots,n$. There is a unique homogeneous quadratic moment map $J:\mathbb C^n\to \mathbb R^\ell\cong \mathfrak g^*$ for this $G$-action 
\begin{equation}\label{quadmom}
J_i(z,\cc z)=\sum_{j=1}^n a_{ij} \:z_j \cc z_j\qquad i=1,\dots \ell.
\end{equation}
A moment map for a Hamiltonian action of a torus $G$ is unique up to a constant in $\mathfrak g^*$.

Using Gaussian elimination we find that 
upon an integer change of the basis of $\mathfrak g$ and permutations of the coordinates of $\mathbb C^n$ the weight matrix $A$ can be brought into upper triangular form
\begin{equation}
\left(\begin{array}{ccccc|cc}
\diamondsuit&0 &0 &\cdots & &*&\cdots\\
0&\diamondsuit &0 &\cdots & &*&\cdots\\
0&0&\diamondsuit  &\cdots & &\cdots &\\
\cdots&&          &       & &\cdots &\\
0&0&\cdots&   & \diamondsuit &*&\cdots\\\hline
0&0&\cdots&&&0&\cdots\\
\cdots&&&&&\cdots&
\end{array}\right)=:
\left(\begin{array}{c}
\tilde{A}\\\hline
0        \\
\end{array}\right),
\end{equation}
where $\diamondsuit$ indicates nonzero and $*$ arbitrary integer entries. 
In the above representation the lower righthand block of zeros does not occur $\Leftrightarrow$ $\operatorname{Rank}_{\mathbb Q} A=\ell$ $\Leftrightarrow$ there does not exist a compact one-parameter subgroup acting trivially $\Leftrightarrow$ there does not exist a one-parameter subgroup acting trivially $\Leftrightarrow$ the $G$-action is \emph{effective}. Otherwise we may divide out the $(\ell-\operatorname{Rank}_{\mathbb Q} A)$-dimensional torus acting trivially and consider the resulting effective action of a $(\operatorname{Rank}_{\mathbb Q} A)$-dimensional torus with weight matrix $\tilde A$. 

In order to prove that the Koszul complex on the homogeneous quadratic moment map of equation (\ref{quadmom}) for $A$ of full rank $\ell$ is acyclic one can argue as follows. Using Gaussian elimination one can show that the homogeneous ideal generated by $J_1,\dots, J_\ell,z_{\ell+1}\cc z_{\ell+1},\dots,z_n\cc z_n$ in $\mathbb K[\bs z,\cc{\bs z}]$ coincides with the ideal generated by $z_1\cc z_1,\dots,z_n\cc z_n$. It is therefore of maximal height $n$. Hence, by Theorem 17.4 (iii) of \cite{Matsumura}  $J_1,\dots, J_\ell,z_{\ell+1}\cc z_{\ell+1},\dots,z_n\cc z_n$
is a regular sequence in $\mathbb K[\bs z,\cc{\bs z}]$. By the same theorem we conclude that the subsequence $J_1,\dots, J_\ell$ is also a regular in $\mathbb K[\bs z,\cc{\bs z}]$. The claim now follows easily (for more details cf. section \ref{prop21}). With a little bit more technique one can also show the acyclicity for inhomogeneous $J$, the proof will be postponed to section \ref{prop21}. 

\begin{proposition} \label{toriareci} For any effective linear Hamiltonian torus action with (not necessarily homogeneous quadratic) moment map $J$ the Koszul complex on $J$ is acyclic. 
\end{proposition}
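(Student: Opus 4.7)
The plan has two main steps: (i) show that $J_1, \ldots, J_\ell$ is a regular sequence in the polynomial ring $\mathbb K[\bs z, \cc{\bs z}]$, and (ii) transfer this to acyclicity of the smooth Koszul complex $K_\bullet(\mathcal C^\infty(\mathbb C^n), J)$.

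For step (i), note that with respect to the standard grading of $\mathbb K[\bs z, \cc{\bs z}]$, the top-degree parts of the $J_i$ are the homogeneous quadratic components $J_i^{(2)} := \sum_j a_{ij} z_j \cc z_j$, which the excerpt has already shown to form a regular sequence whenever $A$ has full rank $\ell$. Filter the Koszul complex $K_\bullet(\mathbb K[\bs z, \cc{\bs z}], J)$ by total degree; its associated graded is the Koszul complex on the $J_i^{(2)}$, which is acyclic in positive degrees. Since the filtration is exhaustive and bounded below in each homological degree, the associated spectral sequence converges and forces $K_\bullet(\mathbb K[\bs z, \cc{\bs z}], J)$ to be acyclic in positive degrees as well, so $J_1, \ldots, J_\ell$ is a regular sequence in $\mathbb K[\bs z, \cc{\bs z}]$.

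For step (ii), regard the Koszul complex as a complex of free $\mathcal C^\infty$-module sheaves on $\mathbb C^n$. Because the Koszul differential is $\mathcal C^\infty$-linear, a partition-of-unity argument reduces global acyclicity to acyclicity on the stalks $\mathcal E_p$ (smooth germs at $p \in \mathbb C^n$). Off $Z$ some component $J_i$ is a unit in $\mathcal E_p$, so the complex contracts. At $p \in Z$, propagate the polynomial regularity of step (i) through a chain of local Noetherian ring maps: localization at $\mathfrak m_p$ preserves regularity; faithfully flat $\mathfrak m_p$-adic completion then gives regularity in $\mathbb K[[\bs z - p, \cc{\bs z} - \cc p]] = \widehat{\mathcal O_p}$; faithful flatness of $\mathcal O_p \hookrightarrow \widehat{\mathcal O_p}$ lets the sequence descend to the ring $\mathcal O_p$ of real-analytic germs; and finally Malgrange's flatness theorem for smooth germs over analytic germs yields $\mathcal E_p$-regularity, hence local Koszul acyclicity.

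I expect the principal obstacle to be the last transfer from polynomial to smooth germs: Malgrange's flatness theorem is the essential analytic input and is best treated as a black box. By contrast, step (i) is essentially routine commutative algebra once the excerpt's homogeneous computation is granted, and the stalk-to-global reduction is a standard consequence of the $\mathcal C^\infty$-linearity of the Koszul differential together with the existence of partitions of unity on $\mathbb C^n$.
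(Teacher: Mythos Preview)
Your proposal is correct and lands on the same overall architecture as the paper: prove regularity of $J_1,\dots,J_\ell$ in $\mathbb K[\bs z,\cc{\bs z}]$, push it through the chain polynomial $\to$ formal $\to$ analytic $\to$ smooth germs via (faithful) flatness and Malgrange's theorem, and finish with a partition of unity. Step~(ii) is essentially identical to the paper's argument; your extra observation that off $Z$ some $J_i$ is a unit is harmless but unnecessary, since the flatness transfer already covers every point.

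The one genuine difference is in step~(i). The paper does \emph{not} deduce the inhomogeneous case from the homogeneous one via a filtration. Instead it proves the inhomogeneous case directly: the $\mathbb K$-algebra map $\varphi\colon\mathbb K[\bs x]\to\mathbb K[\bs z,\cc{\bs z}]$, $x_j\mapsto z_j\cc z_j$, is flat (it is a tensor product of free extensions), and $J_i=\varphi(L_i)$ for the affine-linear forms $L_i=\sum_j a_{ij}x_j+c_i$; since $\operatorname{rank}(a_{ij})=\ell$, the $L_i$ can be completed to an automorphism of $\mathbb K[\bs x]$, so $L_1,\dots,L_\ell$ is manifestly regular, and flatness of $\varphi$ transfers regularity to the $J_i$. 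Your route---filter the Koszul complex so that the associated graded is the Koszul complex on the homogeneous parts $J_i^{(2)}$, use the already-established regularity of $J^{(2)}$, and invoke convergence of the spectral sequence---is equally valid and is the classical ``initial forms'' argument. The paper's approach is more self-contained and avoids any spectral sequence machinery; yours has the virtue of reusing the homogeneous computation from Section~\ref{HTA} rather than reproving it, at the cost of setting up the correct grading on $\wedge^\bullet\mathfrak g\otimes\mathbb K[\bs z,\cc{\bs z}]$ so that $\partial^{(2)}$ is the graded piece that survives.
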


The next result provides several characterizations of linear Hamiltonian torus actions which fulfill the generating hypothesis.

\begin{proposition} \label{equiv} For a linear Hamiltonian torus action with moment map $J=(J_1,\dots,J_\ell):\mathbb C^n\to \mathbb R^\ell$, $J_i(z,\cc z)=\sum_{j=1}^n a_{ij} \:z_j \cc z_j$ and corresponding nonzero weight matrix $A=(a_{ij})\in \mathbb Z^{\ell\times n}$ of rank $r\le\ell$ the following statements are equivalent 

\begin{enumerate}[\quad\rm(i)]
\item \label{np} $J$ changes sign at every $\zeta\in Z=J^{-1}(0)$,
\item \label{vs} the image of $J$ is a vector subspace of $\mathbb R^\ell$ of dimension $r$,
\item \label{conv} $0$ is in the relative interior of the convex hull $\operatorname{conv(A)}\subset \mathbb R^\ell$ of the set of column vectors of $A$.
\end{enumerate}
More generally, $\mu$ is in the relative interior of the image of $J$ if and only if the shifted moment map $J_\mu:=J-\mu$ changes sign at every $\zeta\in J^{-1}_\mu(0)=J^{-1}(\mu)$.
\end{proposition}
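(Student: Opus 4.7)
The plan is to exploit the observation that the image of $J$ equals the convex cone $C := \{\sum_{j=1}^n t_j v_j : t_j \ge 0\}$ generated by the column vectors $v_1,\dots,v_n$ of $A$, since $J(z) = \sum_j |z_j|^2 v_j$ and the $|z_j|^2$ range over arbitrary non-negative reals. Given this, (ii)~$\Leftrightarrow$~(iii) is pure convex geometry: a finitely generated cone is a linear subspace iff it contains the negative of each generator, which in turn is equivalent to the existence of a strictly positive relation $\sum_j \lambda_j v_j = 0$, i.e.\ $0 \in \operatorname{relint}(\conv(A))$. One direction is immediate by solving for $-v_k$; the other follows by summing the individual non-negative relations $-v_k = \sum_{j \ne k} t_{jk} v_j$ into a single strictly positive one.

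For (i)~$\Leftrightarrow$~(iii), fix $\zeta \in Z$ and $\xi \in \mathfrak{g}$, and set $b_j := \langle v_j, \xi\rangle$ together with $S := \{j : \zeta_j = 0\}$. Expanding
\[
J_\xi(\zeta + w) \,=\, 2 \sum_{j \notin S} b_j \operatorname{Re}\bigl(\overline{\zeta_j}\, w_j\bigr) + \sum_j b_j |w_j|^2
\]
shows that $J_\xi$ automatically changes sign at $\zeta$ unless the linear part vanishes (i.e.\ $b_j = 0$ for all $j \notin S$), in which case sign change is equivalent to the $b_j$ with $j \in S$ not all having one common sign. Assuming (iii), choose $\lambda_j > 0$ with $\sum_j \lambda_j v_j = 0$; then $\sum_j \lambda_j b_j = 0$, which combined with $b_j = 0$ for $j \notin S$ yields $\sum_{j \in S}\lambda_j b_j = 0$ and forces the $b_j$ ($j \in S$) to either all vanish or take both signs. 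Conversely, specializing (i) to $\zeta = 0 \in Z$ (where $S = \{1,\dots,n\}$) reads: for every $\xi$, the numbers $\langle v_j, \xi\rangle$ are either all zero or of mixed signs, which by the separating hyperplane theorem is exactly $0 \in \operatorname{relint}(\conv(A))$.

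For the final generalization, the same expansion applies verbatim to $J_\mu := J - \mu$ at $\zeta \in J^{-1}(\mu)$, since the constant term $\mu_\xi$ cancels against $J_\xi(\zeta) = \mu_\xi$. The obstruction to sign change of $J_\mu$ at some $\zeta$ is therefore a $\xi$ with $\langle v_j, \xi\rangle = 0$ for $j \notin S(\zeta)$ and $\langle v_j, \xi\rangle \ge 0$ for $j \in S(\zeta)$, not all zero. Using the identity $\mu = \sum_{j \notin S(\zeta)} |\zeta_j|^2 v_j$, such a $\xi$ is precisely a nontrivial supporting hyperplane at $\mu$, certifying that $\mu$ lies on the relative boundary of $C$; conversely any supporting functional at $\mu$ must vanish on $v_j$ for $j \notin S(\zeta)$ (since $|\zeta_j|^2 > 0$) and be nonnegative on the remaining $v_j$, producing a $\zeta$ and $\xi$ violating sign change. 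The only mild obstacle is the bookkeeping of the local expansion at a general $\zeta \ne 0$ and matching the vanishing-coordinate set $S(\zeta)$ with faces of $C$ containing $\mu$; once that dictionary is in place, everything reduces to separating hyperplane arguments.
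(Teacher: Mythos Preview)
Your argument is correct and rests on the same ingredients as the paper's proof: the identification of $\operatorname{im}(J)$ with the cone $A(\mathbb R_{\ge 0}^n)$, a separating-hyperplane/Farkas argument, and the Taylor expansion of $J_\xi$ at a point of the fibre. The main difference is organizational. The paper argues cyclically (i)$\Rightarrow$(ii)$\Rightarrow$(iii)$\Rightarrow$(i), invoking Farkas' Lemma for (i)$\Rightarrow$(ii) and a somewhat involved face argument for (ii)$\Rightarrow$(iii); you instead establish (ii)$\Leftrightarrow$(iii) directly via the elementary observation that a finitely generated cone is a linear subspace iff its generators admit a strictly positive linear relation, and then handle (i)$\Leftrightarrow$(iii) through the local expansion, specializing to $\zeta=0$ for the reverse implication. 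Your route to (ii)$\Leftrightarrow$(iii) is a bit more self-contained than the paper's, which reaches (iii) only after passing through (i). For the shifted statement the two arguments essentially coincide: the local expansion shows that any obstruction to sign change forces the linear terms to vanish and is therefore global, after which one matches it with a supporting functional of the cone at $\mu$; you make the dictionary between the vanishing-coordinate set $S(\zeta)$ and faces of the cone a little more explicit than the paper does.
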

\begin{proof} We use the standard euclidean scalar product $\mathbb R^\ell \times \mathbb R^\ell \to \mathbb R$, $(v,w)\mapsto v\cdot w$ in order to identify $\mathbb R^\ell$ with its dual space.

(\ref{np}) $\Rightarrow$ (\ref{vs}): Observe that the map $\mathbb C^n\to \mathbb R^n_+$, $(z_1,\dots, z_n)\mapsto(|z_1|^2,\dots, |z_n|^2)$ is onto. Let us also denote by $A:\mathbb R^n\to \mathbb R^\ell$ the linear map defined by the weight matrix. The image of $J$ is obviously the same as $A(\mathbb R_+^n)$, and we claim that the latter is $\operatorname{im}(A)$ as a consequence of (\ref{np}). We have to show that for any $v\in \operatorname{im}(A)$ there is an $x\in \mathbb R^n_+$ with $Ax=v$. Otherwise there would be, according to \cite[Farkas Lemma II]{Ziegler}, a $w\in \mathbb R^\ell$ such that $w^t A\ge 0$ and $w\cdot v<0$. This implies that $J(z,\cc z)\cdot w=\sum_{i,j}w_i\:a_{ij}|z_j|^2\ge 0$. The condition (\ref{np}) forces that to be a strict equality, i.e., $\sum_{i,j}w_i\:a_{ij}|z_j|^2=0$ for all $z$. It follows that $w$ has to be in $\operatorname{im}(A)^\perp$, which contradicts $w\cdot v<0$. 

(\ref{vs}) $\Rightarrow$ (\ref{conv}):
 First of all we show that $\Kern(A)\cap \mathbb R^n_+\ne\{0\}$. Let us assume the contrary. Because of (\ref{vs}) $A(\mathbb R^n_+)$ is a vector space. In particular, for every $v \in A(\mathbb R^n_+)$ it follows that $-v \in A(\mathbb R^n_+)$, i.e., there exist $x,x'\ge 0$ such that $Ax=v$ and $Ax'=-v$. Adding both equations we conclude that $v=0$. Therefore, we have  $A(\mathbb R^n_+)=0$, which implies that $A$ is the zero matrix, which proves the claim. Hence, we may assume that there exists a nonzero solution  $z=(z_1,\dots,z_n)\in\mathbb C^n$ of the equation $\sum_j a_{ij}|z_j|^2=0$. Therefore, zero is a convex linear combination $0=\sum_j a_{ij}\lambda_j$ with $\lambda_j=|z_j|^2/\|z\|^2$. It follows that the affine hull $\operatorname{aff}(A)$ of the column vectors of $A$ and $\Bild(A)$ conincide. Now we have to check that zero is not in any proper face of the polytope $\operatorname{conv}(A)$.  The latter would mean that there is a nonzero vector $v$ in $\operatorname{aff}(A) $ such that $v\cdot x\ge 0$ for all $x\in \operatorname{conv}(A)$. On the other hand, because of condition (\ref{vs}), every $w\in\operatorname{im}(A)$ can be written as $w=\alpha x$ for some $\alpha>0$ and $x\in\operatorname{conv}(A)$. We conclude that $v\cdot w\ge 0$ for all $w\in \operatorname{im}(A)$, which implies $A=0$.

(\ref{conv}) $\Rightarrow$ (\ref{np}): Let $v=(v_1,\dots,v_\ell)\in \mathbb R^\ell$ be some vector.
We have to check that $J(z)\cdot v=\sum_{i,j} v_i\:a_{ij} |z_j|^2$ changes sign. In fact if $v\in \Bild(A)^\perp$ this is true trivially. Let us therefore assume that $v\in\Bild(A)$. By assumption it cannot happen, that $v^tA\lambda\ge 0$ (or $\le 0$) for all  $\lambda=(\lambda_1,\dots,\lambda_n)\in [0,1]^n$ with $\sum_i \lambda_i=1$. Hence the vector $v^tA\in \mathbb R^n$ must have components with strictly positive and components with strictly negative entry. The claim  now follows easily.


\vspace{2mm}
In order to proof the last assertion assume that $\mu$ in in the image of $J$. $\mu$ is in a proper face of the polyhedral cone $A(\mathbb R^n_+)=J(\mathbb C^n)$ $\Leftrightarrow$ there is a vector $v\in\Bild(A)\subset \mathbb R^\ell$ such that $v\cdot (A\lambda-\mu)\ge 0$ for all $\lambda \in \mathbb R^n_+$ $\Leftrightarrow$
\begin{equation}\label{munp}
(J_\mu\cdot v)(z,\cc z)=\sum_{i,j}v_i(a_{ij}|z_j|^2-\mu_i)\ge 0
\end{equation}
for all $z\in \mathbb C^n$. This proves implication $\Leftarrow$. It remains to check that if inequality (\ref{munp}) holds in a neighborhood of $\zeta\in J^{-1}(\mu)$ then it must be true globally. If zero would be a regular value of $J_\mu\cdot v$ then the function $J_\mu\cdot v$ would change sign in every neigborhood of $\zeta$. Therefore zero  must be a singular value and the linear terms in the Taylor expansion of  $J_\mu\cdot v$ around $\zeta$ vanish. Hence we obtain $J_\mu\cdot v=\sum_{i,j}v_i a_{ij}|z_j-\zeta_j|^2\ge 0$. If that inequality holds in a neighborhood of $\zeta$ it is clearly fulfilled for all $z\in\mathbb C^n$.
\end{proof}

If we sharpen the condition (\ref{conv}) in the above proposition slightly then, due an observation of Bosio and Meersseman \cite{BM}, we obtain a smooth intersection $X_A:=Z\cap \mathbb S^{2n-1}$ of the zero fibre $Z=J^{-1}(0)$ with the unit sphere and hence $Z$ itself becomes a cone on the manifold $X_A$. In fact, one has to require that the location of $0$ in $\conv(A)$ is generic in the sense that if $0\in\conv(B)$ for some $\ell\times m$ submatrix $B$ of $A$ then we must have that $m>\ell$. Following \cite{BM} we will call matrices $A\in \mathbb R^{\ell\times n}$ with that property \emph{admissible}. 

\section{Examples}

We would like to indicate that there are plenty of nonorbifold quotients $M_0=Z/G$ among the torus actions covered by the three equivalent conditions of Proposition \ref{equiv}. 
To this end observe
that by the slice theorem
the link of an orbifold singularity has to be isomorphic to the quotient
of a sphere by a finite group
action. Applied to our situation, this means that in case the reduced
space is an orbifold
the link of the lowest dimensional stratum has to be a rational homology
sphere.
Now that link is given by the quotient $Y_A := X_A / G$, so we just have
to test whether
this space  is not a rational homology sphere to find nonorbifold
quotient spaces among
the torus actions we consider.


In order to understand the topology of $X_A$ it is convenient to make use of the following trick. Two admissible matrices $A_0,A_1$ are called \emph{homotopy equivalent} if they can be joined by a smooth curve $A_t$, $0\le t\le 1$, of admissible matrices. It follows from Ehresmann fibration theorem that $X_{A_0}$ and $X_{A_1}$ are diffeomorphic. 

For low dimensions $\ell=1,2,3$ it is a good idea to draw pictures of the polytopes $\conv(A)$. If a column vector appears $n$-fold in the matrix $A$ we will assign to the corresponding vector in the picture the  multiplicity $n$. The operation of multiplying all multiplicties with the same $m\ge 1$ can be interpreted as replacing the one-particle system corresponding to $A$ by the $m$-particle system with the ``total'' moment map corresponding to $mA=(A|A|...)$. 
\vspace{2mm}\\
\noindent $\bs{\ell=1:}$ If $A=(a_1,\dots, a_n)$ is a weight matrix of an $\mathbb S^1$-action then $A$ is admissible if and only if none of the entries is zero and $A$ has strictly positive as well as strictly negative entries. 
We conclude that $X_A$ is diffeomorphic to $\mathbb S^{2n_+-1}\times \mathbb S^{2n_--1}$, where $n_+$ and $n_-$ are the number of positive and negative entries of $A$, respectively.\psfrag{n+}{$n_+$}
\psfrag{n-}{$n_-$}
\begin{equation*}
 \includegraphics[width=2cm]{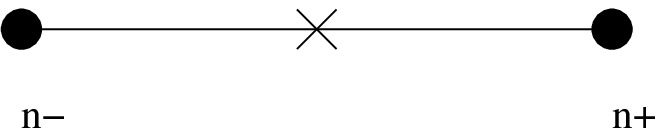}
\end{equation*}
 It is clear that an $\mathbb S^1$-quotient of $\mathbb S^{2n_+-1}\times \mathbb S^{2n_--1}$ cannot be a homology sphere for $n_+,n_-\ge 2$.
\vspace{2mm}\\
\noindent $\bs{\ell=2:}$ It is not difficult to show that any admissible matrix $A$ with $\ell=2$ rows is homotopy equivalent to a matrix corresponding to a $(2k+1)$-gon, $k\ge 1$, with  multiplicities, centered around zero 
\psfrag{n1}{$n_1$}
\psfrag{n2}{$n_2$}
\psfrag{n3}{$n_3$}
\psfrag{n4}{$n_4$}
\psfrag{n5}{$n_5$}
\begin{equation*}
\raisebox{-10mm}{\includegraphics[width=2.1cm]{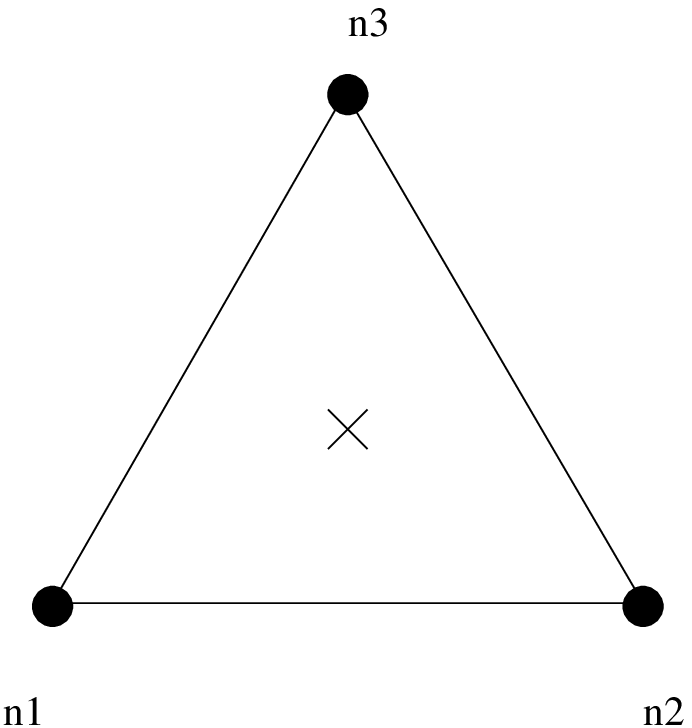}}\qquad,\qquad\raisebox{-15mm}[15mm][18mm]{\includegraphics[width=3.5cm]{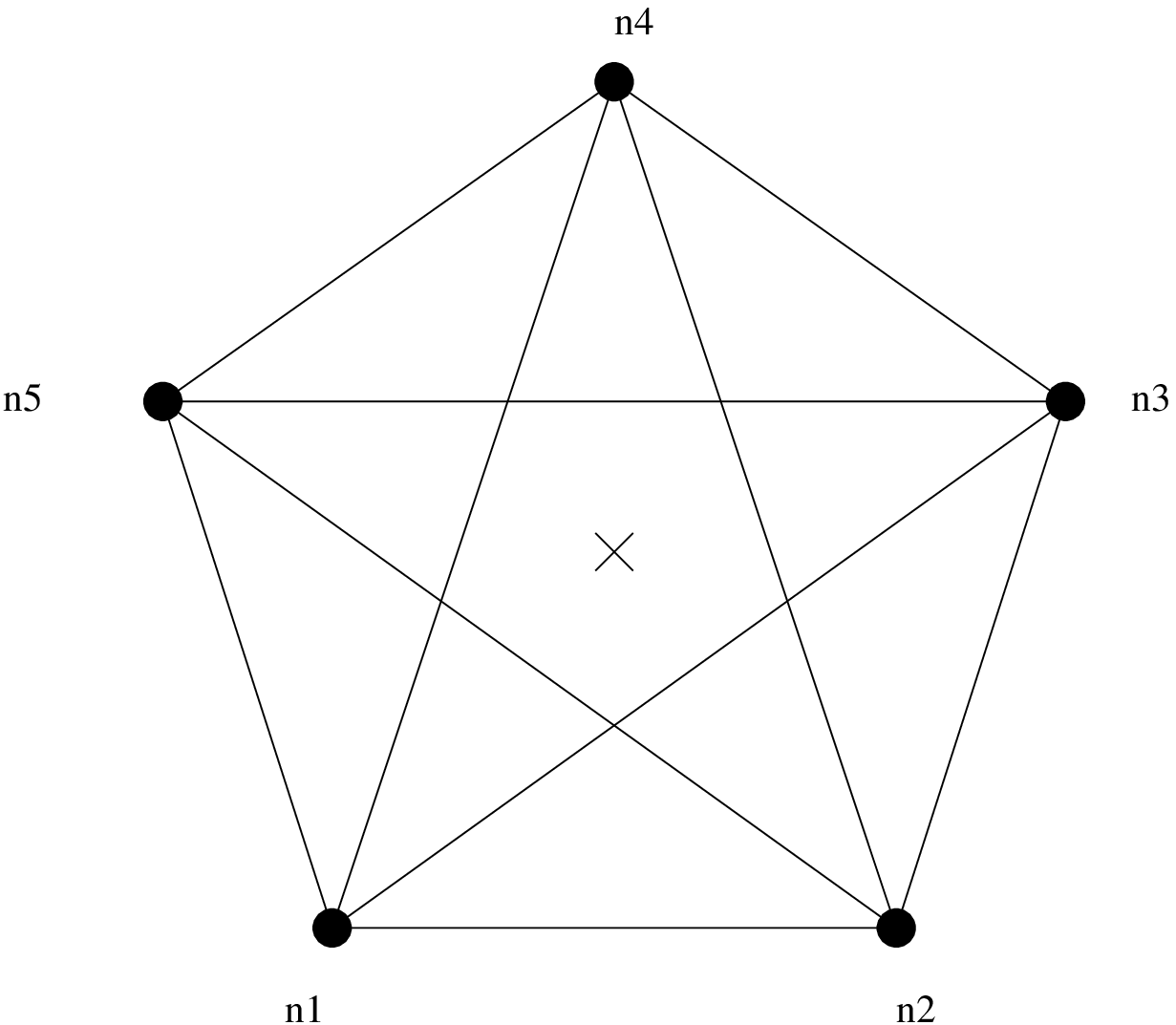}}\qquad\mbox{ etc.}
\end{equation*}
A theorem of Lopez de Medrano \cite{LdM} says that if $A$ is such an admissible $2\times n$-matrix then $X_A$ is diffeomorphic to
\begin{itemize}
\item $\mathbb S^{2n_1-1}\times \mathbb S^{2n_2-1}\times \mathbb S^{2n_3-1}$ for $k=1$, i.e., for a triangle,
\vspace{1mm}
\item $\connsum_{i=1}^{2k+1} (\mathbb S^{2d_i-1}\times \mathbb S^{2n-2d_i-2})$ for $k\ge 2$, where $\connsum$ denotes the connected sum and $d_i=n_i+\dots+n_{i+k-1}$ taken modulo $2k+1$. 
\end{itemize}
We conjecture that for fixed $k$ for all, except possibly finitely many, of those $X_A$ the quotient $X_A/\mathbb T^2$ cannot be a rational homology sphere for dimensional reasons.
\vspace{2mm}\\
\noindent\textbf{Cross-polytope $\bs{\mathbb T^\ell}$-action:} The most simple torus action with  a nonadmissible weight matrix which fullfills the requirements of Proposition \ref{equiv} is the one corresponding to the $2$-dimensional cross-polytope, that is, the one with weight matrix
\psfrag{1}{$1$}
\begin{equation*}
 A_2=\left(\begin{array}{cccc}
1&-1&0&0\\
0&0&1&-1
\end{array}\right)\qquad\mbox{and diagram }\qquad\quad\raisebox{-15mm}{\includegraphics[width=2.8cm]{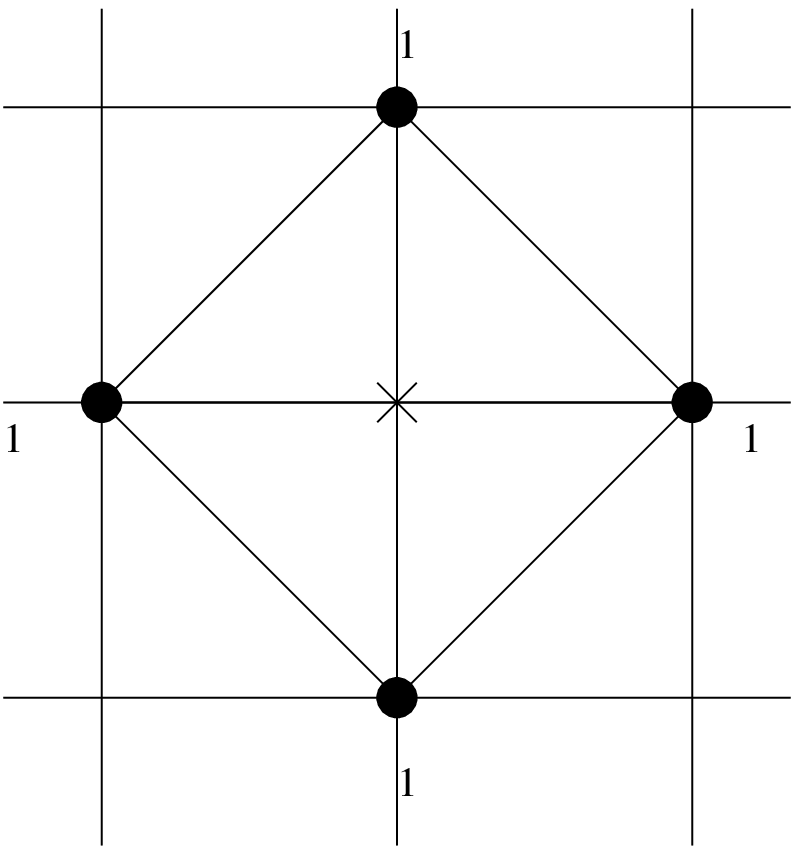}}\quad,
\end{equation*}
defining a $G=\mathbb T^2$ action on $\mathbb C^4$. The equations defining $X_{A_2}$ are $|z_1|^2=|z_2|^2=\lambda$, $|z_3|^2=|z_4|^2=1-\lambda$, $\lambda\in [0,1]$. The boundary strata at $\lambda=0,1$ are diffeomorphic to $\mathbb T^2$ and are the orbit type strata of the subgroups $\mathbb S^1\times{\id}$ and $\id\times \mathbb S^1$, respectively. An element $(u_1,u_2)=(\ee^{2\pi\ii\xi^1},\ee^{2\pi\ii \xi^2})\in \mathbb T^2$ acts on $\mathbb C^4$ by 
\[(z_1,z_2,z_3,z_4)\mapsto(u_1z_1,u_1^{-1}z_2,u_2 z_3,u_2^{-1}z_4).\]
The strata of $Y_{A_2}=X_{A_2}/\mathbb T^2$ are therefore the top stratum $\mathbb S^1\times \mathbb S^1\times (0,1)$ and the two circles which are the orbit type strata of the subgroups $\mathbb S^1\times{\id}$ and $\id\times \mathbb S^1$, respectively. It is easy to see that  $Y_{A_2}$ is homeomorphic to a $3$-sphere with an embedded Hopf link. It is also straightforward to show that  $Y_{A_2}$ is homeomorphic to the following orbit space of a finite group: take the canonical action of $\mathbb Z_{j_1}\times \mathbb Z_{j_2}$ on $\mathbb S^3\subset \mathbb C\times \mathbb C$ for any  numbers $j_1,j_2\ge 2$. The quotient $Y_{mA_2}$ of the corresponding $m$-particle system is given by replacing the $\mathbb S^1$ in the construction of $Y_{A_2}$ by $\mathbb S^{2m-1}\times \mathbb S^{2m-1}/\mathbb S^1$. Here $\mathbb S^1$ acts on $\mathbb S^{2m-1}\times \mathbb S^{2m-1}\subset \mathbb C^m\times \mathbb C^m$ by $u.(\bs z_1,\bs z_2)=(u\bs z_1,u^{-1}\bs z_2)$. We conjecture that $Y_{mA_2}$ is not a rational homology sphere for $m\ge 2$.

The discussion generalizes almost verbatim to the situation where the weight matrix is $A_\ell$, $\ell\ge 2$, the matrix whose column vectors are the vertices $\{\pm e_i\}_{i=1,\dots,\ell}$ of the $\ell$-dimensional cross-polytope. In this manner we define an action of $\mathbb T^\ell$ on $\mathbb C^{2\ell}$. In the argument we have to replace the intervall by the $(\ell-1)$-dimensional simplex $\Delta^{\ell-1}$. The poset  underlying  the orbit type stratification of $Y_{A_\ell}$ corresponds to the  poset of faces of  $\Delta^{\ell-1}$. Again, we may write $Y_{A_\ell}$ as a quotient of $\mathbb S^{2\ell-1}\subset \mathbb C^\ell$ modulo $\mathbb Z_{j_1}\times\dots\times\mathbb Z_{j_{\ell}}$ for $j_1,\dots,j_\ell\ge 2$.

\section{Proof of Proposition \ref{toriareci}}\label{prop21}

In this section we recall, with proofs, some elementary results in commutative algebra. 

A homomorphism $\varphi\colon R\to S$ of commutative rings is said to be \emph{flat} if $S$ is flat when viewed as an $R$-module via $\varphi$. 

\begin{lemma}
\label{lem:basechange}
Let $K$ be a commutative ring. Let $\varphi\colon R\to S$ and $\varphi'\colon R'\to S'$ be homomorphisms of $K$-algebras. When $\varphi$ and $\varphi'$ are flat, so is the induced homomorphism $\varphi\otimes_{K}\varphi'\colon R\otimes_{K}R'\to S\otimes_{K}S'$.
\end{lemma}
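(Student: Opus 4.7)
The plan is to factor the homomorphism $\varphi\otimes_{K}\varphi'$ through an intermediate ring and verify that each factor is flat; since flat homomorphisms compose, this gives the result.

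More precisely, I would write the map as the composition
\[
R\otimes_{K}R' \xrightarrow{\id_R\otimes\varphi'} R\otimes_{K}S' \xrightarrow{\varphi\otimes\id_{S'}} S\otimes_{K}S'.
\]
The first arrow is obtained from $\varphi'\colon R'\to S'$ by base change along the $K$-algebra map $K\to R$, in the sense that there is a canonical isomorphism $R\otimes_{K}S'\cong (R\otimes_{K}R')\otimes_{R'}S'$ of $(R\otimes_{K}R')$-modules. Since $S'$ is flat over $R'$ by hypothesis, extension of scalars along $R'\to R\otimes_{K}R'$ preserves flatness, so $R\otimes_{K}S'$ is flat over $R\otimes_{K}R'$. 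An entirely analogous argument, with the roles of $\varphi$ and $\varphi'$ interchanged, shows that the second arrow presents $S\otimes_{K}S'$ as a flat module over $R\otimes_{K}S'$: indeed, $S\otimes_{K}S'\cong(R\otimes_{K}S')\otimes_{R}S$, and $S$ is flat over $R$ by hypothesis.

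Finally, flatness is transitive: if $A\to B$ and $B\to C$ are flat ring maps, then $C$ is flat over $A$. Applying this to $R\otimes_{K}R'\to R\otimes_{K}S'\to S\otimes_{K}S'$ yields the flatness of $\varphi\otimes_{K}\varphi'$. There is no real obstacle here; the only thing to be a bit careful about is the bookkeeping of the canonical isomorphisms $R\otimes_{K}S'\cong (R\otimes_{K}R')\otimes_{R'}S'$ and $S\otimes_{K}S'\cong (R\otimes_{K}S')\otimes_{R}S$ that identify the two factors as base changes. Both are immediate from associativity of the tensor product, so the whole argument reduces to standard facts that can be quoted from any reference on commutative algebra.
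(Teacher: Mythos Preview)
Your argument is correct and follows essentially the same strategy as the paper: factor $\varphi\otimes_{K}\varphi'$ into two maps, identify each as a base change of one of the given flat maps via the associativity isomorphism, and conclude by transitivity of flatness. The only cosmetic difference is that the paper passes through the intermediate ring $S\otimes_{K}R'$ (first applying $\varphi\otimes_{K}R'$, then $S\otimes_{K}\varphi'$) whereas you pass through $R\otimes_{K}S'$; the two factorizations are symmetric and equally valid.
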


\begin{proof}
For any module $W$ over $R\otimes_{K}R'$ the natural map
\[
S\otimes_{R}W\to  (S\otimes_{K}R')\otimes_{(R\otimes_{K}R')} W,\quad s\otimes w\mapsto s\otimes 1 \otimes w
\]
is an isomorphism with inverse given by $s\otimes r'\otimes w\mapsto s\otimes r 'w$. 
Since $S$ is flat over $R$, one thus deduces that the homomorphism $\varphi\otimes_{K}R'$ is flat.

In the same vein, the homomorphism $S\otimes_{K}\varphi'$ is flat. It remains to note that 
$\varphi\otimes_{K}\varphi'$ is the composition $(S\otimes_{K}\varphi')\circ (\varphi\otimes_{K}R')$.
\end{proof}

Let $R$ be a commutative ring. We say that a sequence $\bs{r}=r_{1},\dots,r_{n}$ in $R$ is \emph{regular} if $r_{i+1}$ is a non-zero divisor on the ring $R/R(x_{1},\dots,x_{i})$ for  $0\leq i\leq n-1$.
In the literature, a regular sequence is also required to satisfy $R/R\bs r\ne 0$, but this condition is not important here.

We say that a homomorphism $\varphi\colon R\to S$ is \emph{faithful} if for any non-zero $R$-module $M$, the $S$-module $S\otimes_{R}M$ is non-zero.

\begin{lemma}\label{imageofrs}
\label{lem:flat}
Let $\bs{r}=r_{1},\dots,r_{n}$ a sequence of elements in a commutative ring $R$ and $\varphi\colon R\to S$ a homomorphism of rings that is flat. If the sequence $\bs r$ is regular in $R$, then the sequence $\varphi(\bs r)$ in $S$ is regular; the converse holds if $\varphi$ is also faithful.
\end{lemma}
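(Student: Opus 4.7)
The proof is a routine induction on the length $n$ of the sequence, driven by the principle that flatness of $\varphi$ preserves exactness of sequences of $R$-modules, while faithfulness additionally lets one detect vanishing of an $R$-module from the vanishing of its base change to $S$.

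For the base case $n=1$, observe that $r_{1}\in R$ is a non-zero divisor if and only if the complex $0\to R \xrightarrow{r_{1}} R$ is exact. Flatness of $\varphi$ means that $S\otimes_{R}(-)$ preserves this exactness, yielding exactness of $0\to S \xrightarrow{\varphi(r_{1})} S$, so $\varphi(r_{1})$ is a non-zero divisor in $S$. For the converse, let $K$ be the kernel of multiplication by $r_{1}$ on $R$; flatness produces an isomorphism $S\otimes_{R}K \cong \ker(\varphi(r_{1}))$, which vanishes by hypothesis, and the assumed faithfulness of $\varphi$ then forces $K=0$.

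For the inductive step, one invokes the standard identification $S\otimes_{R}R/(r_{1},\dots,r_{i})R \cong S/(\varphi(r_{1}),\dots,\varphi(r_{i}))S$, obtained by tensoring the right-exact sequence $R^{i}\to R \to R/(r_{1},\dots,r_{i})R \to 0$ with $S$. Applying the $n=1$ argument to multiplication by $r_{i+1}$ on the quotient $R/(r_{1},\dots,r_{i})R$, and using flatness (respectively faithful flatness) of $\varphi$, one transports non-zero-divisor status in either direction, completing the induction.

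The main (minor) obstacle is simply to confirm that the quotient construction commutes with the base change $S\otimes_{R}(-)$ under flatness; once that is in place, both directions unfold identically at each step, and no ingredient beyond the definitions of flatness and of a faithful homomorphism is needed. Note that the convention here, which allows $R/R\bs{r}=0$ in the definition of regularity, means we do not need to separately track whether the sequence generates the unit ideal in $R$ or in $S$.
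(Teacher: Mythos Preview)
Your proof is correct and follows essentially the same approach as the paper: both reduce to the single-element case via the identification $S\otimes_{R}R/(r_{1},\dots,r_{i})R\cong S/(\varphi(r_{1}),\dots,\varphi(r_{i}))S$ and then use that flatness gives $S\otimes_{R}\ker(r\cdot)\cong\ker(\varphi(r)\cdot)$, with faithfulness supplying the converse. The only cosmetic difference is that the paper packages the induction by observing that the induced map $R/Rr\to S/S\varphi(r)$ is again flat (and faithful), so one may simply assume $n=1$, whereas you tensor directly over $R$ at each step.
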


\begin{proof}
For any element $r$ in $R$, the induced homomorphism 
\[
\varphi\otimes_{R}(R/Rr)\colon R/Rr\to S/S\varphi(r)
\]
is flat, and it is faithful when $\varphi$ is faithful. These claims follows from the observation that the functor $-\otimes_{R/Rr}(S/S\varphi(r))$ of $R/Rr$-modules coincides with $-\otimes_{R}S$. 

It thus suffices to verify the desired result for a single element $r$ in $R$. Note that $r$ is non-zero divisor if and only if  $\mathrm{Ker}(R\xrightarrow{\,r\,}R)=0$ holds. Since $\varphi$ is flat, there is an isomorphism
 \[
S\otimes_{R}\mathrm{Ker}(R\xrightarrow{\ r\ }R)\cong \mathrm{Ker}(S\xrightarrow{\ \varphi(r)\ }S)\,.
\]
It follows that when $r$ is a non-zero divisor, the element $\varphi(r)$ in $S$ is a non-zero divisor, and that the converse holds when $\varphi$ is faithful.
\end{proof}

\begin{proposition}\label{Jisrs}
Let $\bs J = J_{1},\dots,J_{\ell}$ be elements in the polynomial ring $K[\bs z,\cc{\bs z}]$, where each $J_{i}=\Sigma_{j=1}^{n}a_{ij}z_{j}\cc{z}_{j} + c_{i}$, with $a_{ij},c_{i}$ in $K$, for all $i,j$. If the matrix $(a_{ij})$ has rank $\ell$, then the sequence $\bs J$ is regular.
\end{proposition}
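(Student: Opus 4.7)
My approach is to reduce the inhomogeneous statement to the case $c_1=\dots=c_\ell=0$ by means of a two-variable deformation, and to dispose of the homogeneous case by the recipe already sketched in the paragraph preceding Proposition \ref{toriareci}.

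\textbf{Homogeneous case.} If all $c_i$ vanish, then because $(a_{ij})$ has rank $\ell$ I may permute the $z$-variables so that the first $\ell$ columns of $A$ are linearly independent. Reducing modulo $z_{\ell+1}\cc z_{\ell+1},\dots,z_n\cc z_n$ turns $J_1,\dots,J_\ell$ into $\ell$ linearly independent $K$-combinations of $z_1\cc z_1,\dots,z_\ell\cc z_\ell$, so
\[
(J_1,\dots,J_\ell,z_{\ell+1}\cc z_{\ell+1},\dots,z_n\cc z_n)=(z_1\cc z_1,\dots,z_n\cc z_n).
\]
The right-hand ideal has height $n$, so Theorem 17.4(iii) of \cite{Matsumura}, applied to the Cohen--Macaulay ring $K[\bs z,\cc{\bs z}]$, identifies the sequence of $n$ generators on the left as regular. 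Its prefix $J_1,\dots,J_\ell$ is therefore regular as well.

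\textbf{Deformation and descent.} For the general (inhomogeneous) case I would adjoin two new indeterminates $t_1,t_2$, work in $\mathcal R:=K[\bs z,\cc{\bs z},t_1,t_2]$, and introduce the homogenisations
\[
\widetilde J_i:=\sum_{j=1}^n a_{ij}\,z_j\cc z_j+c_i\,t_1t_2,\qquad 1\le i\le\ell.
\]
These fit the hypotheses of the proposition for the enlarged weight matrix $(A\mid c)\in K^{\ell\times(n+1)}$, which still has rank $\ell$, with zero constant term; by the homogeneous case just proved, $\widetilde J_1,\dots,\widetilde J_\ell$ is regular in $\mathcal R$. Localising at $t_1t_2$ is flat, so by Lemma \ref{lem:flat} the sequence stays regular in $\mathcal R':=\mathcal R[t_1^{-1},t_2^{-1}]$. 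Now the $K$-algebra automorphism $\sigma$ of $\mathcal R'$ fixing $t_1,t_2$ and sending $z_j\mapsto t_1z_j$, $\cc z_j\mapsto t_2\cc z_j$ satisfies $\sigma(\widetilde J_i)=t_1t_2\,J_i$; since ring automorphisms and multiplication by units preserve regularity, $J_1,\dots,J_\ell$ is regular in $\mathcal R'$. Finally, setting $R:=K[\bs z,\cc{\bs z}]$, the inclusion $R\hookrightarrow\mathcal R'$ exhibits $\mathcal R'$ as a nonzero free $R$-module (a basis is given by the Laurent monomials in $t_1,t_2$), and is hence faithfully flat. The faithful direction of Lemma \ref{lem:flat} then descends regularity of $\bs J$ from $\mathcal R'$ back to $R$.

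\textbf{Expected difficulty.} No single step is deep; the only conceptual point is choosing the right homogenisation. A single auxiliary variable $t$ and $\widetilde J_i:=\sum_j a_{ij}z_j\cc z_j+c_it^2$ would make the $\widetilde J_i$ homogeneous, but the subsequent twist would require $z_j\mapsto t^{1/2}z_j$, which is unavailable in a polynomial ring; splitting $t$ into the pair $(t_1,t_2)$ and using the ``Hermitian'' monomial $t_1t_2$ sidesteps this. Otherwise the argument is routine bookkeeping, the one subtlety being to verify that $R\to\mathcal R'$ is faithful as well as flat, which is what powers the final descent.
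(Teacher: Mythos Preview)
Your argument is correct, but it is not the route the paper takes. The paper's proof handles the homogeneous and inhomogeneous cases in one stroke: it introduces the $K$-algebra map $\varphi\colon K[x_1,\dots,x_n]\to K[\bs z,\cc{\bs z}]$, $x_j\mapsto z_j\cc z_j$, observes that $\varphi$ is flat (indeed, each $K[x_j]\to K[z_j,\cc z_j]$ is free, and one tensors using Lemma~\ref{lem:basechange}), and notes that $J_i=\varphi(L_i)$ for the affine forms $L_i=\sum_j a_{ij}x_j+c_i$. The rank-$\ell$ hypothesis lets one complete $L_1,\dots,L_\ell$ to an affine coordinate system, so they are transparently a regular sequence in $K[\bs x]$; flatness of $\varphi$ then pushes regularity forward via Lemma~\ref{lem:flat}. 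In short, the paper linearises the problem by a \emph{single} flat base change and never separates off the constant terms $c_i$.

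Your approach instead bootstraps from the homogeneous height argument (the one the paper sketches before Proposition~\ref{toriareci}) and then manufactures homogeneity in the general case by the $t_1t_2$ trick, descending along the faithfully flat inclusion $K[\bs z,\cc{\bs z}]\hookrightarrow K[\bs z,\cc{\bs z},t_1^{\pm 1},t_2^{\pm 1}]$. This is sound and self-contained, and the two-variable homogenisation is a nice device; but it is visibly longer than the paper's argument, which avoids the homogenise--localise--twist--descend cycle entirely by noticing that the substitution $x_j=z_j\cc z_j$ already turns the $J_i$ (constants and all) into affine-linear forms. The payoff of the paper's route is uniformity and brevity; the payoff of yours is that it recycles the Cohen--Macaulay height computation verbatim and uses only the faithfully-flat direction of Lemma~\ref{lem:flat} in a way that makes the role of the constants very explicit.
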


\begin{proof}
Let $K[\bs x]$ be a polynomial ring on indeterminates $\bs x =x_{1},\dots,x_{\ell}$. Consider the homomorphism of $K$-algebras 
\[
\varphi\colon K[\bs x]\to K[\bs z,\cc{\bs z}]
\]
defined by $\varphi(x_{i})=z_i\cc{z}_i$ for $i=1,\dots,\ell$.  We claim that this map is flat. 

Indeed, observe that $\varphi=\varphi_{1}\otimes_{K}\cdots\otimes_{K}\varphi_{n}$, where $\varphi_{i}\colon K[x_{i}]\to K[z_{i},\cc{z}_{i}]$ is the homomorphism of $K$-algebras defined by $\varphi(x_{i})=z_{i}\cc{z}_{i}$. In view of Lemma~\ref{lem:basechange}, it thus suffices to prove that each $\varphi_{i}$ is flat. Note that $K[z_{i},\cc{z}_{i}]$ is even free as a $K[x_{i}]$-module, on a basis $\{1\}\sqcup \{z^{e}_{i},(\cc{z}_{i})^{e}\}_{e\geqslant 1}$.

For each $1\leq i\leq \ell$, let $L_{i}$ denote the linear form $\Sigma_{j=1}^{n}a_{ij}x_{j}+c_{i}$ in $K[\bs x]$. Since $\varphi(L_{i})=J_{i}$ and $\varphi$ is flat, for the desired result it suffices to prove that the sequence $L_{1},\dots,L_{\ell}$ in $K[\bs x]$ is regular, by Lemma~\ref{lem:flat}.

Now let $\sigma\colon K[\bs x]\to K[\bs x]$ be the homomorphism of $K$-algebras with  $\sigma(x_{i})=L_{i}$ for each $i$. The rank condition on the matrix $(a_{ij})$ implies that $\sigma$ admits an inverse, and hence it is an automorphism. In particular, the map $\sigma$ is flat, so it suffices to verify that the sequence $x_{1},\dots,x_{\ell}$ is regular on $K[\bs x]$, again by Lemma~\ref{lem:flat}. The desired result is now evident
\end{proof}
\begin{proof}[Proof of  Proposition~\emph{\ref{toriareci}}] Let us introduce some notation. We denote by $\mathcal F_{\bs a}=\mathbb K[\![\bs z-\bs a,\cc{\bs z}-\cc{\bs a}]\!]$ the ring of formal power series  at $\bs{a}=(a_1,\dots,a_n)\in\mathbb C^n$ over the field $\mathbb K$ (which is $\mathbb R$ or $\mathbb C$). Similarly, $\mathcal O_{\bs a}=\mathbb K\{\bs z-\bs a,\cc{\bs z}-\cc{\bs a}\}$ stands for the ring of convergent power series at $\bs{a}\in\mathbb C^n$. The ring of germs at $\bs{a}\in\mathbb C^n$ of smooth functions on $\mathbb C^n$ will be denoted by $\mathcal E_{\bs a}$. Observe that we have the following chain of inclusions of $\mathbb K$-algebras
\begin{equation}\label{arrows}
\xymatrix{
\mathbb K[\bs z,\cc{\bs z}]\ar[r]^{\hspace{2mm}\footnotesize (1) \normalsize}_{\hspace{2mm}\mbox{\footnotesize flat}}& \mathcal F_{\bs a}&&\mathcal O_{\bs a}\ar[ll]_{\hspace{-0mm}\footnotesize (2) }^{\hspace{-0mm}\mbox{\footnotesize faithfully flat }}\ar[rr]^{\hspace{-0mm}\mbox{\footnotesize (3) }}_{\hspace{-0mm}\mbox{\footnotesize faithfully flat }}&&\mathcal E_{\bs a}.}
\end{equation}
A proof of the fact that arrow (1) is flat can be found, e.g., in \cite[Chapter 3, Exercise 7.4]{Matsumura}. For a proof of the fact that arrows (2) and (3) are faithfully flat we refer to \cite[Theorem III.4.9, Corollary VI.1.12.]{Malgrange}. From Lemma \ref{imageofrs} and Proposition \ref{Jisrs} it follows that the image of $J_1,\dots,J_\ell$ in $\mathcal E_{\bs a}$ is a regular sequence for every $\bs a$. In particular, the Koszul complex $K_\bullet(\mathcal E_{\bs a},J)$ is acyclic for every $\bs a$. Using a partition of unity argument it follows that $K_\bullet(\mathcal C^\infty(\mathbb C^n),J)$ is acyclic.
  \end{proof}
\section{Quantization}\label{quant}
For sake of completeness let us briefly describe how to find the deformation quantization of Theorem \ref{qth} of the reduced space. For proofs and an explanation of the BFV-machinery used for it we refer to \cite{BHP,HPhD}. First of all, recall that according to \cite{ACG,SL} the algebra of smooth functions on the possibly singular reduced space is the Poisson algebra $\mathcal C^\infty(M)^G/   C^\infty(M)^G\cap I_Z$. Here we denote by  $I_Z$  the ideal of smooth functions on $M$ vanishing on $Z$ and we observe that $\mathcal C^\infty(Z)=\mathcal C^\infty(M)/I_Z$. Since $G$ is compact and connected we have an isomorphism 
\[\mathcal C^\infty(Z)^{\mathfrak g}\to \mathcal C^\infty(M)^G/   C^\infty(M)^G\cap I_Z\]
of Fr\'echet  algebras which is given by extending a $G$-invariant function on $Z$ to a $G$ invariant function on $M$ and taking the representative of the latter. In a similar manner, there is an isomorhism of Fr\'echet algebras from $\mathcal C^\infty(Z)^{\mathfrak g}$ to $N(I_Z)/I_Z$, where $N(I_Z)=\{f\in\mathcal C^\infty(M)\:|\:\{f,I_Z\}\subset I_Z\}$ is the normalizer of $I_Z$ in $\mathcal C^\infty(M)$. Now the generating hypothesis means that $I_Z$ is just the ideal $\langle J_1,\dots,J_\ell\rangle_{\mathcal C^\infty(M)}$ generated by the components of the moment map. A crucial observation is that this implies that $Z$ is \emph{first class}, i.e. $I_Z$ is a Poisson subalgebra of $\mathcal C^\infty(M)$, and hence both aforementioned isomorphisms are isomorphisms of Poisson algebras. We are looking for a reduced star product $*_0$, that is, a continuous associative deformation of  this Poisson algebra in the formal parameter $\nu$.

The first ingredient for the reduced star product is a continuous splitting of the augmented Koszul complex  
\begin{equation*}
\xymatrix{0& \mathcal C^\infty(Z)\ar[l]\ar@/_1pc/[r]_{\prol}&\mathcal C^\infty(M)\ar[l]_{\res}\ar@/_1pc/[r]_{h_0}&\mathfrak g\otimes \mathcal C^\infty(M)\ar[l]_{\partial_1}\ar@/_1pc/[r]&\cdots\ar[l].}
\end{equation*}
The existence of a continuous extension map $\prol$ follows from the fact that real analytic sets have the extension property \cite{BS,BMil}. Note that if $Z$ has  singularities one cannot expect to find an extension map which is an algebra morphism. The existence of continuous contracting homotopies $h_i:K_i\to K_{i+1}$ such that $\id_{K_0}=\prol\res+\partial _1h_0$ and $\id_{K_i}=h_{i-1}\partial_i+\partial_{i+1}h_i$ for all $i\ge 1$ can be proved using the division theorem of \cite{BS}.
In addition, we have to require that $\prol$ and $h_0$ are $G$-equivariant and $h_0\prol=0$. It is not difficult to modify existing splits in order to realize these extra  conditions. The authors do not know any example of a singular zero locus $Z$ where one has explicit formulas for the split. 

The second ingredient is a strongly invariant star  product $*$ on $M$. This means by definition that 
\[J_\xi*f-f*J_\xi=\nu\{J_\xi,f\}\qquad \forall f\in \mathcal C^\infty(M),\xi\in \mathfrak g.\]
It is well-known that such a star product exists for any Hamiltonian action of a compact group on a symplectic manifold. In the case of a linear Hamiltonian action on $\mathbb C^n$ the Wick star product is obviously strongly invariant.

We now introduce a deformed version $\qkos:K_{\bullet}\fnu\to K_{\bullet-1}\fnu$ of the Koszul differential. For sake of simplicity we give here merely the formula for torus actions
\[\qkos:=\sum_i R_{J_i}\iota(e^i),\]
where we denote by $R_f:\mathcal C^\infty(M)\fnu\to\mathcal C^\infty(M)\fnu$, $h\mapsto h*f$ the operator of right multiplication with the function $f$.
It is easy to see that $\qkos$ is a differential such that  $\qkos-\partial$ is of order $\nu$. Hence the geometric series in 
\[\qres:=\res(\id-(\qkos_1-\partial_1)h_0)^{-1}\]
converges $\nu$-adically and the so-called deformed restriction map 
\[\qres=\res +\sum_{i\ge 1}\nu^i\res_i:\mathcal C^\infty(M)\fnu\to \mathcal C^\infty(Z)\fnu\] 
is in fact a formal power series of linear continuous maps $\res_i:\mathcal C^\infty(M)\to \mathcal C^\infty(Z)$.

We are now ready to write down the formula for the reduced star product $*_0$. For invariant smooth functions  $f$ and $g$ on $Z$ we define
\begin{equation}\label{redstar}
f*_0 g:=\qres\big(\prol(f)*\prol(g)\big).
\end{equation}
One still has to check that $f*_0 g$ is a formal power series of invariant functions  and that the resulting operation makes $\mathcal C^\infty(Z)^\mathfrak g\fnu$ indeed into an \emph{associative} $\mathbb K\fnu$-algebra. For a proof of these statements, which is based on the formalism of Batalin-Fradkin-Vilkovisky-quantization \cite{Henneaux} we refer to \cite{BHP,HPhD}. Even though this is not visible in formula (\ref{redstar}) substantial use is made of the acyclicity of the Koszul complex. It is an open problem whether one can get rid of the assumption that $J$ is a complete intersection.
\bibliographystyle{amsplain}
\bibliography{HIP}
\end{document}